\documentclass[reqno,a4paper]{pjm}

\tolerance=2000

\usepackage[active]{srcltx}
\usepackage{amsmath,amssymb,latexsym}
\usepackage{calc}
\usepackage{calrsfs}
\usepackage{comment,color}
\usepackage{enumerate}
\usepackage{enumitem}
\usepackage{hyperref}
\usepackage{ifthen}
\usepackage{xspace}

\numberwithin{equation}{section}

\usepackage{url}
\urlstyle{sf}

\newtheorem{lem}{Lemma}
\newtheorem{thm}{Theorem}

\renewcommand\Re{\operatorname{Re}}
\renewcommand\Im{\operatorname{Im}}

\newcommand{\Alg}{\mathbf{A}}
\newcommand{\A}{\mathbf{A}}
\newcommand{\Ban}{X}
\newcommand{\CC}{\mathbb{C}}
\newcommand{\Cos}{C}
\newcommand{\N}{\mathbb{N}}
\newcommand{\R}{\mathbb{R}}
\newcommand{\T}{\mathbb{T}}
\newcommand{\Z}{\mathbb{Z}}
\newcommand{\co}[2][]{c_{#1}^{(#2)}}

\begin{document}

\title[Cosine families]{On cosine families close to scalar cosine families}

\author[W. Chojnacki]{Wojciech Chojnacki}

\address{School of Computer Science\\
The University of Adelaide \\
SA 5005 \\
Australia}
\email{wojciech.chojnacki@adelaide.edu.au}

\address{Wydzia{\l} Matematyczno-Przyrodniczy, 
  Szko{\l}a Nauk \'Scis{\l}ych \\
  Uniwersytet Kardyna\l{}a Stefana Wyszy\'nskiego \\
  Dewajtis 5,  01-815 Warszawa \\ Poland}
\email{w.chojnacki@uksw.edu.pl}

\keywords{Cosine  family,  cosine  sequence,  normed  algebra,  scalar
  element}

\renewcommand{\subjclassname}{\textnormal{2010} Mathematics Subject Classification}

\subjclass{Primary 47D09, 47C05; Secondary 47A30}

\begin{abstract}
  We prove  that if two normed-algebra-valued  cosine families indexed
  by a  single Abelian group,  of which  one is bounded  and comprised
  solely of scalar elements of  the underlying algebra, differ in norm
  by less  than $1$ uniformly  in the parametrising index,  then these
  families coincide.
\end{abstract}

\maketitle

\section{Introduction}
\label{sec:introduction}

A classic result,  in its early form due  to Cox \cite{cox66:_matric},
states that if $\Alg$ is a normed  algebra with a unity denoted by $1$
and $a$ is an element of $\Alg$ such  that $\sup_{n \in \N} \| a^n - 1
\| <  1$, then $a  = 1$.  Cox's version  concerned the case  of square
matrices  of  a  given  size.   This was  later  extended  to  bounded
operators    on    Hilbert    space   by    Nakamura    and    Yoshida
\cite{nakamura67:_cox},  and   to  an  arbitrary  normed   algebra  by
Hirschfeld \cite{hirschfeld68:_banac} and Wallen \cite{wallen67}.  The
latter author proved in fact a stronger result, namely that
\begin{math}
  \| a^n - 1 \| = o(n)
\end{math}
and
\begin{math}
  \liminf_{n \to \infty} n^{-1} \left( \| a - 1 \| + \|a^2 - 1 \| +
    \dots + \| a^n - 1 \| \right) < 1
\end{math}
imply $a = 1$, and he achieved this by using an elementary argument.

An           immediate           consequence          of           the
Cox--Nakamura--Yoshida--Hirschfeld--Wallen   theorem    is   that   if
$\{S(t)\}_{t \geq  0}$ is a  semigroup on  a Banach space  $\Ban$ such
that
\begin{displaymath}
  \sup_{t \geq 0} \| S(t) - I_{\Ban} \| < 1,
\end{displaymath}
then $S(t)  = I_{\Ban}$ for each  $t \geq 0$; here  $I_{\Ban}$ denotes
the identity  operator on  $\Ban$.  Recently, Bobrowski  and Chojnacki
\cite{bobrowski13:_isolat_point_set_bound_cosin}     established    an
analogue of this  result for one-parameter cosine families:  if $a \in
\R$  and $\{\Cos(t)\}_{t  \in  \R}$ is  a  strongly continuous  cosine
family on a Banach space $\Ban$ such that
\begin{equation}
  \label{eq:1}
  \sup_{t \in \R} \| \Cos(t) - (\cos at )I_{\Ban} \| < \frac{1}{2},
\end{equation}
then  $\Cos(t) =  (\cos at  )  I_{\Ban}$ for  each $t  \in \R$.   This
conclusion   was   further   refined   by   Schwenninger   and   Zwart
\cite{schwenninger:_less} who  showed that condition  \eqref{eq:1} can
be replaced by the condition
\begin{displaymath}
  \sup_{t \in \R} \| \Cos(t) - (\cos at )I_{\Ban} \| < 1.
\end{displaymath}
For the  case $a  = 0$, the  same authors later  showed that  the even
weaker condition
\begin{displaymath}
  \sup_{t \in \R} \| \Cos(t) - I_{\Ban} \| < 2
\end{displaymath}
already suffices  \cite{schwenninger:_zero}.  The result  of Bobrowski
and  Chojnacki and  those of  Schwenninger  and Zwart  rely on  rather
involved  arguments,  drawing  on   ideas  from  operator  theory  and
semigroup theory.

In this  note we  extend the  first result  of Schwenninger  and Zwart
(that is, the  result of \cite{schwenninger:_less}) to  cover the case
of cosine families  that are not necessarily indexed  by real numbers,
not necessarily  operator-valued, and not necessary  continuous in any
particular sense.  A crucial step  towards proving the relevant result
will    be    the    establishment    of   an    analogue    of    the
Cox--Nakamura--Yoshida--Hirschfeld--Wallen    theorem    for    cosine
sequences.  The  proof of the  latter result will use  only elementary
means.

\section{Preliminaries and results}
\label{sec:prel-results}
 
Let $\Alg$ be a normed algebra, real or complex, with a unity $1$.  An
element of $\Alg$  is called \emph{scalar} if it is  a scalar multiple
of the unity $1$.  A family in $\Alg$ is termed \emph{scalar} if every
member of this family is scalar.  Given a scalar $\lambda$, the symbol
$\lambda$ will  be employed to denote  both the scalar itself  and the
element of $\Alg$ obtained by  multiplying the unity element of $\Alg$
by  $\lambda$.  In  particular,  if $\{\lambda_{\gamma}\}_{\gamma  \in
  \Gamma}$ is a family  of scalars, then $\{\lambda_{\gamma}\}_{\gamma
  \in \Gamma}$  will also  denote the  corresponding family  of scalar
elements of $\Alg$.

We recall  that an $\Alg$-valued family  $\{a_{\lambda}\}_{\lambda \in
  \Lambda}$  is  said  to  be  \emph{bounded}  if  $\sup_{\lambda  \in
  \Lambda} \| a_{\lambda} \| < \infty$.

Let  $G$ be  an  Abelian  group, written  additively,  with a  neutral
element $0$.  A  family $\{\Cos(g)\}_{g \in G}$ in $\Alg$  is called a
\emph{cosine family} if
\begin{itemize}
\item [(i)] $2  \Cos (g) \Cos (h)  = \Cos (g+h) + \Cos  (g-h)$ for all
  $g,h \in G$ (d'Alembert's functional equation, also called the cosine
  functional equation), 
\item [(ii)] $\Cos(0) = 1$.
\end{itemize}

With  this minimum  of preparation,  we are  ready to  state the  main
result of the paper.

\begin{thm}
  \label{thm:1}
  Let $\A$ be a normed algebra with a unity $1$, let $G$ be an Abelian
  group, let  $\{c(g)\}_{g \in G}$  be a bounded  scalar-valued cosine
  family, and  let $\{\Cos(g)\}_{g  \in G}$  be an  $\A$-valued cosine
  family such that
  \begin{displaymath}
    \sup_{g \in G} \| \Cos(g) - c(g) \| < 1.
  \end{displaymath}
  Then
  \begin{math}
    \Cos(g) = c(g)
  \end{math}
  for each $g \in G$.
\end{thm}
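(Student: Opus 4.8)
The plan is to reduce the theorem to a single \emph{cosine sequence} and then to combine a spectral argument with an elementary complex-analytic one. Fix $g \in G$ and restrict both families to the cyclic subgroup generated by $g$. Writing $b_n = \Cos(ng)$ and using that a cosine family is automatically even (take $h=0$ in d'Alembert's equation) together with the cosine functional equation, one checks by induction that $b_n = T_n(b_1)$, where $T_n$ is the $n$th Chebyshev polynomial of the first kind; indeed these polynomials obey the very recurrence $2xT_n = T_{n+1} + T_{n-1}$ satisfied by a cosine sequence. The same computation gives $c(ng) = T_n(c(g))$. Since $\{c(ng)\}_n$ is bounded, the scalar $c(g)$ must lie in $[-1,1]$, so I may write $c(g) = \cos\theta$ and $c(ng) = \cos n\theta$ for some $\theta \in \R$. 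As $\sup_n \|b_n - \cos n\theta\| \le \sup_{h \in G} \|\Cos(h) - c(h)\| < 1$, it suffices to prove the following cosine-sequence analogue of the Cox--Nakamura--Yoshida--Hirschfeld--Wallen theorem: \emph{if $\{b_n\}$ is a bounded cosine sequence with $\sup_n \|b_n - \cos n\theta\| < 1$, then $b_1 = \cos\theta$}, whence $b_n = \cos n\theta$ for every $n$.

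For the spectral step I would first show that $\sigma(b_1) = \{\cos\theta\}$. If $\lambda \in \sigma(b_1)$ then $T_n(\lambda) \in \sigma(b_n)$ by the spectral mapping theorem, so $|T_n(\lambda) - \cos n\theta| \le \|b_n - \cos n\theta\| < 1$ for every $n$. Writing $\lambda = \cos z$ with $z \in \CC$ gives $T_n(\lambda) = \cos nz$; boundedness of $\{\cos nz\}_n$ forces $z$ to be real, and then the sharp scalar fact that $\sup_n|\cos nz - \cos n\theta| < 1$ entails $\cos z = \cos\theta$ pins $\lambda$ to the single value $\cos\theta$. (This one-dimensional statement, with the optimal constant $1$, is precisely the scalar content of the result of Schwenninger and Zwart, and I would isolate it as a separate trigonometric lemma.) Consequently $w := b_1 - \cos\theta$ is quasinilpotent, that is, $\sigma(w) = \{0\}$.

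It then remains to upgrade ``quasinilpotent'' to ``zero'', and here only boundedness is needed. I would pass to the generating function $E(z) = \sum_{n\ge1}(b_n - \cos n\theta)z^n$, which converges for $|z|<1$ and satisfies $\|E(z)\| \le r|z|/(1-|z|)$ with $r = \sup_n\|b_n-\cos n\theta\|$. Using $b_n = T_n(b_1)$ and the generating identity $\sum_n T_n(x)z^n = (1-xz)/(1-2xz+z^2)$, a short manipulation yields the closed form
\begin{displaymath}
  E(z) = \frac{z(1-z^2)\,w}{(1-2\cos\theta\,z+z^2)(1-2b_1z+z^2)} .
\end{displaymath}
The scalar factor $1-2\cos\theta z+z^2$ vanishes to first order at $z = e^{\pm i\theta}$, and at those points the factor $1-2b_1z+z^2$ degenerates to a nonzero scalar multiple of $w$. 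Letting $z = \rho e^{i\theta}$ with $\rho \uparrow 1$, the closed form gives $\|E(\rho e^{i\theta})\| \sim (1-\rho)^{-1}\,\|w(\lambda+w)^{-1}\|$ with $\lambda \propto 1-\rho \to 0$; comparing this with the hard bound $\|E\| \le r/(1-\rho)$ shows that $\|w(\lambda+w)^{-1}\|$ stays bounded as $\lambda \to 0$. Since $w(\lambda+w)^{-1} = 1 - \lambda(\lambda+w)^{-1}$, the resolvent of $w$ has at most a simple pole at $0$. But $w$ is quasinilpotent, so its resolvent admits the Laurent expansion $\sum_{k\ge0}(-1)^k w^k\lambda^{-k-1}$, whose $\lambda^{-2}$-coefficient equals $-w$; a simple pole forces that coefficient to vanish, giving $w=0$, i.e.\ $\Cos(g) = c(g)$.

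The main obstacle is the sharp constant. Every step except the spectral one uses merely that $\{b_n\}$ is bounded, so the threshold $1$ enters solely through the trigonometric lemma $\sup_n|\cos nz-\cos n\theta|<1 \Rightarrow \cos z = \cos\theta$; establishing this with the optimal constant, rather than the $1/2$ that a crude triangle-inequality averaging of the recurrence $b_{n+1}-2\cos\theta\,b_n+b_{n-1}=0$ against $b_n$ would yield (which only recovers the earlier Bobrowski--Chojnacki bound), is the delicate point and is exactly what matches the result of Schwenninger and Zwart. The remaining bookkeeping I would dispatch separately: complexifying when $\A$ is real, completing $\A$ so that the resolvent analysis and spectral mapping apply, and treating the degenerate cases $\cos\theta = \pm1$ (where $e^{i\theta}=e^{-i\theta}$ and $1-2\cos\theta z+z^2$ has a double root) by the same singularity analysis with the pole orders shifted accordingly.
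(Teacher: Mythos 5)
Your reduction of Theorem~\ref{thm:1} to a statement about a single cosine sequence is sound and essentially the same as the paper's (the paper invokes Kannappan's theorem to identify the scalar sequence as $\co[n]{\gamma}$ with $\gamma\in\T$; your direct argument via boundedness of the Chebyshev orbit of $c(g)$ does the same job). The trouble lies in your proof of the sequence theorem, and the fatal step is the last one. From the bound $\|E(\rho e^{i\theta})\|\le r\rho/(1-\rho)$ \emph{along the single radial ray} $z=\rho e^{i\theta}$ you conclude that the resolvent of the quasinilpotent element $w$ ``has at most a simple pole at $0$''. That implication is false: the order of an isolated singularity is read off from bounds on full circles $|\mu|=\epsilon$ surrounding it, not from bounds along a ray or even on a half-plane. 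A quasinilpotent element that is not nilpotent has an \emph{essential} singularity at $0$, and such resolvents can perfectly well satisfy $O(1/|\mu|)$ bounds on large sectors: take $w=-V$ with $V$ the Volterra operator on $L^2[0,1]$; since $V$ is accretive, $\|(w-\mu)^{-1}\|=\|(V+\mu)^{-1}\|\le 1/\Re\mu$ for $\Re\mu>0$, so $\|\mu(w-\mu)^{-1}\|$ and $\|w(w-\mu)^{-1}\|=\|1+\mu(w-\mu)^{-1}\|$ are bounded on every sector $|\arg\mu|\le\pi/2-\epsilon$, yet $w\ne 0$. Nor can the defect be cured by using more of the disk: under $\mu(z)=(1-2\cos\theta\,z+z^2)/(2z)$ the open unit disk is mapped onto the complement of the segment $[-1-\cos\theta,\,1-\cos\theta]$, which contains $0$, so no circle around $\mu=0$ is ever reached, and your bound on $\|E(z)\|$ degenerates exactly as $\mu(z)$ nears that segment. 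There is an elementary correct way to finish from a bound on $u:=w(w-\mu)^{-1}$, namely $(1-u)w=-\mu u$, hence $w=-\mu(1-u)^{-1}u$ and $\|w\|\le|\mu|\,\|u\|/(1-\|u\|)\to 0$ provided $\limsup\|u\|<1$; but tracking constants in your estimate gives $\limsup_{\rho\to 1}\|w(w-\mu_\rho)^{-1}\|\le 2r$, not $r$ (the factor $2$ comes from $|1-e^{-i\theta}z|\ge 1-|z|$, with equality only in the radial limit), so this repair proves the theorem only when $r<1/2$ --- recovering the Bobrowski--Chojnacki constant, not the sharp constant $1$ claimed.

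The second gap is the ``trigonometric lemma'' on which your spectral step rests: for real $z,\theta$, $\sup_{n\in\N}|\cos nz-\cos n\theta|<1$ implies $\cos z=\cos\theta$. You never prove it, and the authority you cite does not supply it: the Schwenninger--Zwart theorem concerns strongly continuous cosine families indexed by $\R$, i.e.\ a supremum over all real $t$, which is a strictly stronger hypothesis than a supremum over the integers. The integer-indexed scalar statement is precisely the case $\A=\CC$ of the theorem you are trying to prove, and it is not a triviality (it needs, e.g., a Kronecker--Weyl equidistribution analysis according to the rational dependencies among $z$, $\theta$ and $2\pi$); as you yourself observe, the sharp constant enters exactly there, so black-boxing it leaves the heart of the matter unproved. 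For contrast, the paper proves the sequence theorem (its Theorem~\ref{thm:2}) for an arbitrary normed algebra by purely elementary means --- the summation identity \eqref{eq:3}, the Ces\`aro-average Lemma~\ref{lem:1}, and a squared-difference trick --- which disposes of the scalar case and of the ``quasinilpotent part'' in one stroke, with no spectral mapping, no completion, and no complexification.
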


We shall deduce this theorem  from a seemingly weaker result presented
below as Theorem~\ref{thm:2}. 

We  continue  with  preliminary  definitions and  results.   A  cosine
sequence  is a  cosine  family for  which the  indexing  group is  the
additive group  of integers  $\Z$.  
Every cosine sequence $\{c_n\}_{n\in\Z}$ is even: the equality $c_{-n}
= c_n$ holds  for all $n \in \Z$.  Furthermore,  every cosine sequence
$\{c_n\}_{n\in\Z}$ is  uniquely determined  by its element  indexed by
$1$, namely,
\begin{displaymath}
  \label{eq:2}
  c_n = T_{|n|}(c_1)
  \quad
  (n \in \Z),
\end{displaymath}
where,  for $n \in  \N \cup  \{0\}$, $T_n(x)$  is the  $n$th Chebyshev
polynomial of the first kind
\begin{displaymath}
  T_n(x)=\sum_{k=0}^{[n/2]}\binom{n}{2k}x^{n-2k}(x^2-1)^k.
\end{displaymath}
The element $c_1$ of a  cosine sequence $\{c_n\}_{n\in\Z}$ is commonly
termed the  \emph{generator} of the  sequence.  Every element  of $\A$
generates a unique cosine sequence.   The cosine sequence generated by
$a \in \A$ is given by $c_n(a) = T_{|n|}(a)$ for every $n \in \Z$.

For each $\gamma \in  \CC \setminus \{0\}$, let $\{\co[n]{\gamma}\}_{n
  \in \Z}$ be the $\CC$-valued cosine sequence given by
\begin{displaymath}
  \co[n]{\gamma} = \frac{\gamma^n + \gamma^{-n}}{2}
  \quad
  (n \in \Z).
\end{displaymath}
Denote by $\T$  the unit circle in the complex  plane.  If $\gamma \in
\T$, then
\begin{math}
  \co[n]{\gamma} = \Re \gamma^n
\end{math}
for all $n \in \Z$, so that all elements of $\{\co[n]{\gamma}\}_{n \in
  \Z}$ are real numbers with modulus no greater than $1$.

We are  now ready to  present the result  that is the  main ingredient
needed to prove Theorem~\ref{thm:1}.

\begin{thm}
  \label{thm:2}
  Let $\A$ be a normed algebra with  a unity $1$, let $\gamma \in \T$,
  and let $\{c_n\}_{n \in \Z}$  be an $\A$-valued cosine sequence such
  that
  \begin{displaymath}
    \sup_{n \in \Z} \| c_n - \co[n]{\gamma} \| < 1.
  \end{displaymath}
  Then
  \begin{math}
    c_n = \co[n]{\gamma}
  \end{math}
  for each $n \in \Z$.
\end{thm}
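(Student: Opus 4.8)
The plan is to pass from the algebra to a scalar problem via Gelfand theory, to settle that scalar problem by an elementary mean-value computation that yields the sharp constant $1$, and finally to remove a residual quasinilpotent part, which I expect to be the crux. First I would make two harmless reductions: replacing $\A$ by its completion affects neither hypothesis nor conclusion, and if $\A$ is real I would complexify it, giving $\A\oplus i\A$ the norm $\|x+iy\|=\sup_{t}\|(\cos t)x-(\sin t)y\|$, which restricts to the original norm on $\A$ and preserves the supremum hypothesis because each $c_n-\co[n]{\gamma}$ lies in $\A$ (recall $\co[n]{\gamma}=\Re\gamma^n$ is real). Thus I may assume $\A$ is a complex unital Banach algebra. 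Let $B$ be the closed commutative unital subalgebra generated by $c_1$; since $c_n=T_{|n|}(c_1)$, every $c_n$ and every scalar $\co[n]{\gamma}$ lies in $B$. For a character $\chi$ of $B$ put $z=\chi(c_1)$ and pick $w\in\CC\setminus\{0\}$ with $w+w^{-1}=2z$; the polynomial identity $T_n\big((w+w^{-1})/2\big)=(w^n+w^{-n})/2$ gives $\chi(c_n)=(w^n+w^{-n})/2$. As $\sup_n\|c_n\|\le 1+\sup_n\|c_n-\co[n]{\gamma}\|<\infty$, the numbers $\chi(c_n)$ are bounded, which forces $|w|=1$, whence $\chi(c_n)=\Re w^n$.

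The scalar core is then the claim: if $w\in\T$ and $\sup_n|\Re w^n-\Re\gamma^n|<1$, then $\Re w=\Re\gamma$. Writing $w=e^{i\psi}$, $\gamma=e^{i\theta}$ and $f(n)=\cos n\psi-\cos n\theta$, I would compute the quadratic mean $\lim_{N\to\infty}N^{-1}\sum_{n=1}^N f(n)^2$ by expanding into characters and using that the mean of $e^{in\phi}$ is $1$ when $\phi\equiv0\pmod{2\pi}$ and $0$ otherwise. Whenever $\cos\psi\ne\cos\theta$ (equivalently $\psi\not\equiv\pm\theta$) the cross term averages to $0$ while the two squares contribute $\tfrac12$ each, so the mean equals $1$ (the degenerate cases $\psi$ or $\theta\equiv0\pmod\pi$ only make it larger). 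Since a nonnegative sequence has supremum at least its mean, $\sup_n f(n)^2\ge1$, contradicting the hypothesis. This is the decisive improvement over the naive Fourier-coefficient bound, which gives only $\tfrac12$, and it is exactly what upgrades the admissible constant from $1/2$ to $1$. Applied to each character, the claim shows $\chi(c_1)=\cos\theta$ for every $\chi$, i.e. the spectrum of $c_1$ in $B$ is the single point $\cos\theta=\co[1]{\gamma}$, so $q:=c_1-\co[1]{\gamma}$ is quasinilpotent.

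It remains to prove $q=0$, and this I expect to be the main obstacle, being the precise analogue of the delicate radical part of the Cox--Nakamura--Yoshida--Hirschfeld--Wallen theorem. A useful structural tool comes from subtracting the cosine equation for $\{\co[n]{\gamma}\}$ from that for $\{c_n\}$: with $f(n)=c_n-\co[n]{\gamma}$ one finds $f(m+n)+f(m-n)=2f(m)c_n+2\co[m]{\gamma}f(n)$, and setting $m=n$ (so $f(0)=0$) gives the doubling identity $f(2n)=4\,\co[n]{\gamma}\,f(n)+2f(n)^2$. When $q$ is nilpotent this already closes the argument: if $\gamma\ne\pm1$ then $\sin\theta\ne0$, and writing $c_1=\cos\Theta$ via the holomorphic functional calculus with $\Theta=\theta+N$ and $N$ quasinilpotent gives $f(n)=\cos n\theta(\cos nN-1)-\sin n\theta\sin nN$, whose leading term $-n\sin n\theta\,N$ forces $\|f(n)\|\to\infty$ along a subsequence on which $|\sin n\theta|$ stays bounded away from $0$, contradicting boundedness unless $N=0$.

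The genuine difficulty is the general quasinilpotent $N$, where no single term dominates. Here I would invoke the elementary cosine analogue of Cox's theorem (a bounded cosine sequence whose generator has spectrum a single point of $\T$ must be scalar), proved by a Wallen-type Ces\`aro estimate or by reducing, through $c_n=(u^n+u^{-n})/2$ with $\sigma(u)=\{\gamma\}$, to the scalar Cox theorem quoted in the introduction; the doubling identity above is the natural vehicle for such a Ces\`aro argument. The cases $\gamma=\pm1$, where $\arccos$ is a branch point and the substitution is unavailable, I would treat directly from $\sup_n\|T_n(c_1)-\co[n]{\gamma}\|<1$ by the same method. Once $q=0$, that is $c_1=\co[1]{\gamma}$, the fact that the generator determines the whole sequence gives $c_n=T_{|n|}(c_1)=T_{|n|}(\co[1]{\gamma})=\co[n]{\gamma}$ for every $n$, completing the proof.
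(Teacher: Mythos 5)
Your argument is sound, and essentially complete, up to the point where you conclude that $q=c_1-\co[1]{\gamma}$ is quasinilpotent: the passage to the completed, complexified algebra, the character computation, and the quadratic-mean estimate with the sharp constant $1$ are all correct. But that is the easy, spectral half of the problem, and your proof stops exactly where the real difficulty begins, as you yourself half-concede. The step ``$q$ quasinilpotent $\Rightarrow q=0$'' is deferred to an unproven ``elementary cosine analogue of Cox's theorem,'' and neither of your two suggestions for it can be carried through. The reduction via $u=c_1+i\sqrt{1-c_1^2}$, $\sigma(u)=\{\gamma\}$, $c_n=(u^n+u^{-n})/2$, founders because the Cox--Nakamura--Yoshida--Hirschfeld--Wallen theorem needs the norm hypothesis $\sup_n\|\gamma^{-n}u^n-1\|<1$ (and Gelfand's theorem on doubly power-bounded elements with one-point spectrum would need $\sup_{n\in\Z}\|u^n\|<\infty$), while your hypotheses control only the symmetric combination $(u^n+u^{-n})/2$: the ``sine part'' of $u^n$ is entirely unconstrained. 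Indeed, already when $q^2=0$ one has $u^n=c_n+iU_{n-1}(c_1)\sqrt{1-c_1^2}$ with $U_{n-1}(\co[1]{\gamma}+q)=\frac{\sin n\theta}{\sin\theta}+U_{n-1}'(\co[1]{\gamma})\,q$ and $U_{n-1}'(\co[1]{\gamma})$ unbounded in $n$, so any a priori bound on $\{\gamma^{-n}u^n\}$ presupposes the very conclusion $q=0$; the reduction is circular. Your alternative, ``a Wallen-type Ces\`aro estimate,'' is a pointer to a proof rather than a proof; worse, you propose to establish the intermediate claim for \emph{every} bounded cosine sequence whose generator has one-point spectrum, thereby discarding the hypothesis $\sup_n\|c_n-\co[n]{\gamma}\|<1$ at exactly the stage where it is indispensable — you would be proving a statement stronger than Theorem~\ref{thm:2}, not reducing it to something simpler. (The nilpotent case you do settle is genuinely special: there the expansion of $c_n$ in powers of $N$ has a top-order term in $n$ which dominates; for a general quasinilpotent $N$ no such term exists, which is precisely why this case is the crux.)

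For contrast, the paper never passes through spectral theory at all. It proves the identity $2(\co[1]{\gamma}-c_1)\sum_{k=0}^{n-1}\gamma^kc_k=\gamma^nc_{n-1}-\gamma^{n-1}c_n-c_1+\gamma^{-1}$, deduces from it the behaviour of the Ces\`aro means $\frac{1}{n}\sum_{k=0}^{n-1}c_k$ (Lemma~\ref{lem:1}, whose proof uses $\|c_k-\co[k]{\gamma}\|\le\delta<1$ term by term, e.g.\ to obtain $\|1-P_n\|\le\delta$), and then averages the squares $(c_k-\co[k]{\gamma})^2$ to trap $\|c_1-\co[1]{\gamma}\|\le\delta^2\|c_1-\co[1]{\gamma}\|$. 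That single elementary argument disposes of the scalar part and the radical part simultaneously, and it works in any normed algebra, real or complex, complete or not. Your Gelfand reduction, by contrast, isolates the radical part and then leaves it unresolved; filling that hole would amount, in substance, to redoing the whole content of the paper.
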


Theorem~\ref{thm:2}   can  be   viewed   as  a   counterpart  of   the
Cox--Nakamura--Yoshida--Hirschfeld--Wallen    theorem    for    cosine
sequences.   Its proof  will be  much  in the  spirit of  the work  of
Wallen, although the details will be more complicated.

\section{Proof of Theorem~\ref{thm:2}}
\label{sec:main-result}

This section is  devoted to proving Theorem~\ref{thm:2}.   We begin by
establishing a key algebraic identity.

Let $\A$ be a  normed algebra with a unity $1$, let  $\gamma \in \T$, and
let $\{c_n\}_{n  \in \Z}$ be  an $\A$-valued cosine sequence.   Then
\begin{align}
  \label{eq:3}
   2 (\co[1]{\gamma} - c_1)\sum_{k=0}^{n-1} \gamma^k c_k
   =
   \gamma^n c_{n-1} - \gamma^{n-1} c_n - c_1 + \gamma^{-1}.
\end{align}
Indeed, by the cosine functional equation,
\begin{displaymath}
  2 c_1 \sum_{k=0}^{n-1} \gamma^k c_k = 
  \sum_{k=0}^{n-1} \gamma^k c_{k-1} + \sum_{k=0}^{n-1}
  \gamma^k c_{k+1} 
  = \sum_{k=-1}^{n-2} \gamma^{k+1} c_k + \sum_{k=1}^n
  \gamma^{k-1} c_k.
\end{displaymath}
On the other hand,
\begin{displaymath}
  2 \co[1]{\gamma} \sum_{k=0}^{n-1} \gamma^k c_k
  = (\gamma + \gamma^{-1}) \sum_{k=0}^{n-1} \gamma^k c_k 
  = \sum_{k=0}^{n-1} \gamma^{k+1} c_k + \sum_{k=0}^{n-1} \gamma^{k-1} c_k.
\end{displaymath}
Hence
\begin{align*}
  2 (\co[1]{\gamma} - c_1)\sum_{k=0}^{n-1} \gamma^k c_k
  & =
  \left(\sum_{k=0}^{n-1} - \sum_{k=-1}^{n-2}\right) \gamma^{k+1} c_k
  +
  \left(\sum_{k=0}^{n-1} - \sum_{k=1}^n\right) \gamma^{k-1} c_k
  \\
  & = \gamma^n c_{n-1} - c_{-1} - \gamma^{n-1} c_n + \gamma^{-1},
\end{align*}
which, given that $c_{-1} = c_1$, immediately yields \eqref{eq:3}.

We  next observe  that  the  sequence $\{c_n\}_{n  \in  \Z}$ from  the
statement of  Theorem~\ref{thm:2} is necessarily bounded,  because the
sequence $\{\co[n]{\gamma}\}_{n  \in \Z}$  from the same  statement is
bounded and 
\begin{math}
    \sup_{n \in \Z} \| c_n - \co[n]{\gamma} \| < 1.
\end{math}

\begin{lem}
  \label{lem:1}
  Under the  assumptions of Theorem~\ref{thm:2},  if, for each  $n \in
  \N$, we put
  \begin{displaymath}
    P_n := \frac{1}{n} \sum_{k=0}^{n-1} c_k,
  \end{displaymath}
  then $P_n = 1$ for each $n \in \N$ if $\gamma = 1$, and $\lim_{n \to
    \infty} P_n = 0$ if $\gamma \neq 1$.
\end{lem}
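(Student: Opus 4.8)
The plan is to split according to whether $\gamma = 1$ or $\gamma \neq 1$ and, in both cases, to exploit a single family of asymptotic identities coming from the cosine functional equation. Throughout write $d := \sup_{n \in \Z} \| c_n - \co[n]{\gamma} \| < 1$, and recall that $\{c_n\}$, and hence $\{P_n\}$, is bounded. The basic tool I would record first is that, for each \emph{fixed} $m \in \N$, one has $(c_m - 1)P_n \to 0$ as $n \to \infty$. To obtain this, apply the cosine functional equation in the form $2 c_m c_k = c_{m+k} + c_{m-k}$, sum over $k = 0, \dots, n-1$, and reindex the two resulting sums; using $c_{-j} = c_j$ one sees that each of them equals $n P_n$ plus a fixed number (depending only on $m$) of bounded boundary terms. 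Dividing by $n$ gives $2(c_m - 1)P_n = O(1/n)$, which is the claim. When $\gamma = 1$, the case $m = 1$ of this is exactly \eqref{eq:3} divided by $n$.

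For $\gamma = 1$ we have $\co[m]{1} = 1$ for all $m$. Writing $c_k = 1 + f_k$ with $\| f_k \| \le d$, and putting $F_n := P_n - 1 = \tfrac1n \sum_{k=0}^{n-1} f_k$ (so that $\| F_n \| \le d$), the identity for $m = 1$ reads $f_1(1 + F_n) = O(1/n)$. Taking norms yields $\| f_1 \|(1 - d) \le o(1)$, and since $d < 1$ this forces $f_1 = 0$, that is $c_1 = 1$. Then $c_n = T_{|n|}(c_1) = T_{|n|}(1) = 1$ for every $n$, whence $P_n = 1$ for every $n$, as required.

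For $\gamma \neq 1$ the factor $c_m - 1$ is not invertible in general, so the smallness hypothesis must enter through a well-chosen index. The key observation is that, because $\co[m]{\gamma} = \Re(\gamma^m) = \cos(m\theta)$ for $\gamma = e^{i\theta}$ with $\theta \not\equiv 0 \pmod{2\pi}$, there is an $m \in \N$ with $\co[m]{\gamma} \le -\tfrac12$: after replacing $\gamma$ by $\bar\gamma$ if necessary (legitimate since $\co[m]{\gamma} = \co[m]{\bar\gamma}$) to arrange $\theta \in (0, \pi]$, the smallest $m$ with $m\theta \ge 2\pi/3$ satisfies $m\theta \in [2\pi/3, 4\pi/3)$ and hence $\cos(m\theta) \le -\tfrac12$. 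Fixing such an $m$, I would split $c_m - 1 = (\co[m]{\gamma} - 1) + (c_m - \co[m]{\gamma})$ inside $(c_m - 1)P_n \to 0$ and use $\| c_m - \co[m]{\gamma} \| \le d$ together with the scalar homogeneity $\| (\co[m]{\gamma} - 1)P_n \| = (1 - \co[m]{\gamma})\| P_n \|$ to deduce $(1 - \co[m]{\gamma})\| P_n \| \le d \| P_n \| + o(1)$. Since $1 - \co[m]{\gamma} \ge \tfrac32 > 1 > d$, the coefficient $(1 - \co[m]{\gamma}) - d$ is bounded below by a positive constant, and therefore $\| P_n \| \to 0$.

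The main obstacle is precisely the subcase of $\gamma \neq 1$ in which $\gamma$ is close to $1$: then $\co[1]{\gamma}$ is close to $1$, the identity at $m = 1$ is too weak to dominate a perturbation of size $d$, and one is forced to pass to a higher power $\gamma^m$ that swings to the far side of the circle. Securing the existence of such an $m$ uniformly in $\gamma \neq 1$ (the arc argument above) is the crux; once it is in hand, the conclusion is a one-line triangle-inequality squeeze.
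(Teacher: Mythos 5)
Your proof is correct, and in the case $\gamma \neq 1$ it takes a genuinely different route from the paper's. Your $\gamma = 1$ argument is essentially the paper's: both rest on the identity \eqref{eq:4} (i.e., \eqref{eq:3} at $\gamma=1$) and the averaged bound $\|1 - P_n\| \leq \delta$. For $\gamma \neq 1$, however, the paper splits into two further cases: when $\gamma = -1$ it reruns the averaging argument with alternating signs to conclude $c_1 = -1$ exactly, and when $\gamma \notin \{-1,1\}$ it works with powers of the generator \emph{in the algebra} --- using the power-reduction expansion of $c_1^l$ in terms of the $c_{l-2k}$ it shows $\|c_1^l - (\co[1]{\gamma})^l\| \leq \delta$, picks $l$ with $|\co[1]{\gamma}|^l < \epsilon$ and $\delta + \epsilon < 1$ (possible precisely because $|\co[1]{\gamma}| < 1$ when $\gamma \neq \pm 1$, which is why $\gamma = -1$ needs separate treatment), and then multiplies \eqref{eq:4} by $1 + c_1 + \cdots + c_1^{l-1}$ to squeeze $\|P_n\| \to 0$. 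You instead take powers \emph{in the index group}: your generalized identity $(c_m - 1)P_n \to 0$ for fixed $m$ (obtained by the same summation-and-reindexing trick that yields \eqref{eq:3}, applied to $2c_m c_k = c_{m+k} + c_{m-k}$, and valid asymptotically since the boundary terms number $O(m)$ once $n > m$), combined with the arc argument producing $m$ with $\co[m]{\gamma} \leq -\tfrac{1}{2}$, makes the real scalar coefficient $1 - \co[m]{\gamma} \geq \tfrac{3}{2}$ dominate $\delta < 1$ by a plain triangle inequality, the scalar homogeneity step $\|(\co[m]{\gamma}-1)P_n\| = (1-\co[m]{\gamma})\|P_n\|$ being legitimate since $\co[m]{\gamma} - 1$ is a real scalar. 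Your version unifies $\gamma = -1$ with the generic case, dispenses with the binomial/Chebyshev expansions entirely, and is arguably the more elementary proof of the lemma as stated; what the paper's route buys in exchange is sharper information at the boundary points (it identifies $c_1 = \pm 1$, hence the whole sequence, when $\gamma = \pm 1$) and a norm-smallness estimate for $\|c_1^l\|$ that is of independent interest.
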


\begin{proof}
  Let $0 < \delta < 1$ be such that
  \begin{math}
    \| c_n - \co[n]{\gamma} \| \leq \delta
  \end{math}
  for each  $n \in  \Z$. We break the proof up into three  cases. 
  
  \textsc{Case} $\gamma = 1$.  Assuming  $\gamma = 1$ in \eqref{eq:3},
  we obtain, for each $n \in \N$,
  \begin{displaymath}
    2(1 - c_1) \sum_{k=0}^{n-1} c_k = c_{n-1} - c_n - c_1 + 1.
  \end{displaymath}
  We can rewrite this as
  \begin{equation}
    \label{eq:4}
    (1 - c_1) P_n = e_n
    \quad
    \text{with
    $e_n = \frac{1}{2n}(c_{n-1} - c_n - c_1 + 1)$}.
  \end{equation}
  Note that
  \begin{math}
    \lim_{n \to \infty} e_n = 0,
  \end{math}
  as $\{c_n\}_{n \in \Z}$ is bounded.   Given that $\co[k]{1} = 1$ for
  every $k \in \Z$, we have, for each $n \in \N$,
  \begin{displaymath}
    1 - P_n = \frac{1}{n} \sum_{k=0}^{n-1} (\co[k]{1} - c_k)
  \end{displaymath}
  and hence
  \begin{displaymath}
    \| 1 - P_n \| \leq \frac{1}{n} \sum_{k=0}^{n-1} \| \co[k]{1} -
    c_k \|
    \leq \delta.
  \end{displaymath}
  Writing
  \begin{displaymath}
    1 - c_1 = (1 - c_1) (1 - P_n) + e_n,
  \end{displaymath}
  we find that
  \begin{displaymath}
    \| 1 - c_1 \|  \leq \delta \|1 - c_1 \| + \| e_n \|,
  \end{displaymath}
  whence, letting $n \to \infty$,
  \begin{displaymath}
    \| 1 - c_1 \|  \leq \delta \|1 - c_1 \|.
  \end{displaymath}
  As $\delta <  1$, we see that $\|  1 - c_1 \| = 0$  and further that
  $c_1 = 1$.   Consequently, $c_n = 1$  for each $n \in  \Z$, and thus
  $P_n = 1$ for each $n \in \N$.

  \textsc{Case} $\gamma = -1$. Assuming $\gamma = -1$ in \eqref{eq:3},
  we deduce that, for each $n \in \N$,
  \begin{displaymath}
    2 (1 + c_1)\sum_{k=0}^{n-1} (-1)^k c_k
    =
    (-1)^{n-1}(c_n + c_{n-1}) + c_1 + 1.
  \end{displaymath}
  Letting
  \begin{displaymath}
    Q_n := \frac{1}{n} \sum_{k=0}^{n-1} (-1)^k c_k,
  \end{displaymath}
  we see that
  \begin{displaymath}
    (1 + c_1) Q_n = f_n
  \quad
  \text{with
  $f_n = \frac{1}{2n}((-1)^{n-1}(c_n + c_{n-1}) + c_1 + 1)$}.
  \end{displaymath}
  Clearly,  $\lim_{n \to  \infty}f_n =  0$. Taking  into account  that
  $\co[k]{-1} = (-1)^k$ for every $k \in \Z$, we have, for each $n \in
  \N$,
  \begin{displaymath}
    1 - Q_n = \frac{1}{n} \sum_{k=0}^{n-1} (-1)^k (\co[k]{-1} - c_k)
  \end{displaymath}
  and hence
  \begin{displaymath}
    \| 1 - Q_n \| \leq
     \frac{1}{n} \sum_{k=0}^{n-1} \| \co[k]{-1} - c_k \|
    \leq \delta.
  \end{displaymath}
  Writing
  \begin{displaymath}
    1 + c_1 = (1 + c_1) (1 - Q_n) + f_n,
  \end{displaymath}
  we see that
  \begin{displaymath}
    \| 1 + c_1 \|  \leq \delta \| 1 + c_1 \| + \| f_n \|,
  \end{displaymath}
  whence, letting $n \to \infty$,
  \begin{displaymath}
    \| 1 + c_1 \|  \leq \delta \| 1 + c_1 \|.
  \end{displaymath}
  As $\delta <  1$, we conclude that $\|  1 + c_1 \| =  0$ and further
  that $c_1 = - 1$.  Consequently, $c_n = (-1)^n$ for each $n \in \Z$,
  and thus
  \begin{displaymath}
    P_n = \frac{1 + (-1)^{n-1}}{2n}
  \end{displaymath}
  for each $n \in \N$, which immediately yields
  \begin{math}
    \lim_{n \to \infty} P_n = 0.
  \end{math}

  \textsc{Case} $\gamma  \notin \{-1,1\}$.   In this case  $\Im \gamma
  \neq 0$, and,  as $|\co[1]{\gamma}| = |\Re \gamma| =  \sqrt{1 - (\Im
    \gamma)^2}$, we have $| \co[1]{\gamma} | < 1$.  Let $\epsilon > 0$
  be  such  that  $\delta  +  \epsilon   <  1$.   Choose  $l  \in  \N$
  sufficiently large so that $| \co[1]{\gamma} |^l < \epsilon$.  It is
  readily proved by induction that
  \begin{displaymath}
    c_1^l = 
    \frac{1}{2^{l-1}} \sum_{k=0}^{\frac{l-1}{2}} \binom{l}{k} c_{l-2k}
  \end{displaymath}
  if $l$ is odd, and
  \begin{displaymath}
    c_1^l =
    \frac{1}{2^l} \binom{l}{\frac{l}{2}} + \frac{1}{2^{l-1}}
    \sum_{k=0}^{\frac{l}{2}-1} \binom{l}{k} c_{l-2k}
  \end{displaymath}
  if   $l$    is   even,    with   similar   formulae    holding   for
  $(\co[1]{\gamma})^l$  (cf.   \cite[formulae   1.320  5  and
  1.320 7]{gradshteyn07:_table}). Hence
  \begin{displaymath}
    \Big \| c_1^l - (\co[1]{\gamma})^l \Big \|
    \leq
    \frac{1}{2^{l-1}} \sum_{k=0}^{\frac{l-1}{2}} \binom{l}{k}
    \left\| c_{l - 2k} - \co[l - 2k]{\gamma} \right\|
    \leq
    \delta
  \end{displaymath}
  if $l$ is odd, and
  \begin{displaymath}
    \left\| c_1^l - (\co[1]{\gamma})^l \right\|
    \leq
    \frac{1}{2^{l-1}} \sum_{k=0}^{\frac{l}{2} -1} \binom{l}{k}
    \left\| c_{l - 2k} - \co[l - 2k]{\gamma} \right\|
    \leq
    \left(1 - \frac{1}{2^l}\binom{l}{\frac{l}{2}} \right)\delta
    <
    \delta
  \end{displaymath}
  if $l$ is even. In either case
  \begin{math}
    \| c_1^l - (\co[1]{\gamma})^l \| \leq \delta.
  \end{math}
  It follows that
  \begin{math}
    \| c_1^l \|
    \leq
    \| c_1^l - (\co[1]{\gamma})^l \| +
    | \co[1]{\gamma} |^l  \leq \delta + \epsilon.
  \end{math}
  At   this  stage,   we  shall   exploit  \eqref{eq:4}   once  again.
  Multiplying both sides of $(1 - c_1) P_n  = e_n$ by $1 + c_1 + \dots
  + c_1^{l-1}$, we get
  \begin{displaymath}
    ( 1 - c_1^l) P_n = (1 + c_1 + \dots + c_1^{l-1})e_n,
  \end{displaymath}
  or equivalently,
  \begin{displaymath}
    P_n = c_1^l P_n + (1 + c_1 + \dots + c_1^{l-1})e_n.
  \end{displaymath}
  Hence
  \begin{displaymath}
    \| P_n \| \leq (\delta + \epsilon) \| P_n \|
    + l \max\{1,  \| c_1 \|^{l-1}\} \| e_n \|,
  \end{displaymath}
  and, as $\lim_{n \to \infty} e_n = 0$,
  \begin{displaymath}
    \limsup_{n \to \infty} \| P_n \| \leq (\delta + \epsilon)
    \limsup_{n \to \infty} \| P_n \|.
  \end{displaymath}
  Remembering that $\delta + \epsilon < 1$, we conclude that
  \begin{math}
    \limsup_{n \to \infty} \| P_n \| = 0,
  \end{math}
  whence
  \begin{math}
    \lim_{n \to \infty} P_n = 0.
  \end{math}
\end{proof}

We now proceed to the proper proof of Theorem~\ref{thm:2}.

\begin{proof}[Proof of Theorem~\ref{thm:2}]
  By the cosine functional equation,
  \begin{displaymath}
    c_k^2 = \frac{1}{2}(1 + c_{2k})
    \quad
    \text{and}
    \quad
    (\co[k]{\gamma})^2 = \frac{1}{2}\big(1 + \co[2k]{\gamma}\big)
  \end{displaymath}
  for every $k \in \Z$.  Hence, for each $n \in \N$,
  \begin{multline}
    \label{eq:5}
      (c_1 - \co[1]{\gamma}) \sum_{k=0}^{n-1} (c_k - \co[k]{\gamma})^2
      \\ 
      \begin{aligned}
      & =  (c_1  -  \co[1]{\gamma})  \sum_{k=0}^{n-1}  \left(c_k^2  +
        (\co[k]{\gamma})^2 - 2 \co[k]{\gamma} c_k\right)
      \\
      & =   (c_1   -   \co[1]{\gamma})  \left[   n   +   \frac{1}{2}
        \sum_{k=0}^{n-1}   c_{2k}   +   \frac{1}{2}   \sum_{k=0}^{n-1}
        \co[2k]{\gamma}   -   2   \sum_{k=0}^{n-1}\co[k]{\gamma}   c_k
      \right].
      \end{aligned}
  \end{multline}
  As  a  first step  in  exploiting  the  above relation,  we  replace
  $\gamma$ by  $\gamma^{-1}$ in  \eqref{eq:3}, whereupon,  taking into
  account that $\co[1]{\gamma} = \co[1]{\gamma^{-1}}$, we find that
  \begin{displaymath}
    2 (\co[1]{\gamma} - c_1)\sum_{k=0}^{n-1} \gamma^{-k} c_k
    =
    \gamma^{-n} c_{n-1} - \gamma^{1-n} c_n - c_1 + \gamma
  \end{displaymath}
  for  each $n  \in \N$.   Adding  this identity  to \eqref{eq:3}  and
  dividing by $2$ yields
  \begin{displaymath}
    2 (\co[1]{\gamma} - c_1)\sum_{k=0}^{n-1} \co[k]{\gamma} c_k
    =
    \co[n]{\gamma} c_{n-1} - \co[n-1]{\gamma} c_n - c_1 + \co[1]{\gamma}.
  \end{displaymath}
  Hence, as  both $\{c_n\}_{n \in \Z}$  and $\{\co[n]{\gamma}\}_{n \in
    \Z}$ are bounded,
  \begin{equation}
    \label{eq:6}
    \lim_{n \to \infty}
    \frac{c_1 - \co[1]{\gamma}}{n} \sum_{k=0}^{n-1}\co[k]{\gamma} c_k = 0.
  \end{equation}
  
  Next, we apply Lemma~\ref{lem:1} to  the cosine sequences
  $\{c_{2n}\}_{n  \in  \Z}$  and  $\{\co[2n]{\gamma}\}_{n  \in  \Z}  =
  \{\co[n]{\gamma^2}\}_{n \in \Z}$, obtaining
  \begin{displaymath}
    \lim_{n \to \infty} \frac{1}{n}\sum_{k=0}^{n-1} c_{2k}
    =
    \begin{cases}
      1 & \text{if $\gamma^2 = 1$},
      \\
      0 & \text{otherwise}.
    \end{cases}
  \end{displaymath}
  By     applying    Lemma~\ref{lem:1}     to     two    copies     of
  $\{\co[2n]{\gamma}\}_{n \in \Z}$, or,  alternatively, by taking into
  account that
  \begin{displaymath}
    \sum_{k=0}^{n-1} \co[2k]{\gamma}
    =
    \frac{1}{2}
    \left[
      \sum_{k=0}^{n-1} \gamma^{2k} + \sum_{k=0}^{n-1} \gamma^{-2k}
    \right]
    =
    \begin{cases}
      1 & \text{if $\gamma^2 = 1$},
      \\
      \frac{1 - \gamma^{2n}}{2(1 - \gamma^2)} + \frac{1 -
        \gamma^{-2n}}{2(1 - \gamma^{-2})} & \text{otherwise},
    \end{cases}
  \end{displaymath}
  we also get
  \begin{displaymath}
    \lim_{n \to \infty} \frac{1}{n}\sum_{k=0}^{n-1} \co[2k]{\gamma}
    =
    \begin{cases}
      1 & \text{if $\gamma^2 = 1$},
      \\
      0 & \text{otherwise}.
    \end{cases}
  \end{displaymath}
  Hence
  \begin{displaymath}
    \lim_{n \to \infty}
    \frac{c_1 - \co[1]{\gamma}}{n} \left[
      n + \frac{1}{2} \sum_{k=0}^{n-1} c_{2k}
      + \frac{1}{2} \sum_{k=0}^{n-1} \co[2k]{\gamma} \right]
    =
    \begin{cases}
      2(c_1 - \co[1]{\gamma}) & \text{if $\gamma^2 = 1$},
      \\
      c_1 - \co[1]{\gamma} & \text{otherwise}.
    \end{cases}
  \end{displaymath}

  If  we  now  combine  the   above  relation  with  \eqref{eq:5}  and
  \eqref{eq:6}, we find that
  \begin{displaymath}
    \lim_{n \to \infty}
    \frac{c_1 - \co[1]{\gamma}}{n}
    \sum_{k=0}^{n-1} (c_k - \co[k]{\gamma})^2
    =
    \begin{cases}
      2(c_1 - \co[1]{\gamma}) & \text{if $\gamma^2 = 1$},
      \\
      c_1 - \co[1]{\gamma} & \text{otherwise}.
    \end{cases}
  \end{displaymath}
  Hence
  \begin{displaymath}
    \lim_{n \to \infty}
    \left\|
      \frac{c_1 - \co[1]{\gamma}}{n}
      \sum_{k=0}^{n-1} (c_k - \co[k]{\gamma})^2
    \right\|
    \geq \| c_1 - \co[1]{\gamma} \|.
  \end{displaymath}
  On the other hand, if $0 < \delta < 1$ is such that
  \begin{math}
    \| c_k - \co[k]{\gamma} \| \leq \delta
  \end{math}
  for every $k \in \Z$, then 
  \begin{displaymath}
    \left\|
      \frac{c_1 - \co[1]{\gamma}}{n}
      \sum_{k=0}^{n-1} (c_k - \co[k]{\gamma})^2
    \right\|
    \leq 
    \frac{\| c_1 - \co[1]{\gamma} \|}{n}
    \sum_{k=0}^{n-1} \| c_k - \co[k]{\gamma} \|^2
    \leq
    \delta^2 \|c_1 - \co[1]{\gamma} \|
  \end{displaymath}
  for each $n \in \N$.  Therefore,
  \begin{displaymath}
    \|c_1 - \co[1]{\gamma} \| \leq  \delta^2 \|c_1 - \co[1]{\gamma} \|,
  \end{displaymath}
  which implies  $\|c_1 - \co[1]{\gamma}  \| =  0$ and further  $c_1 =
  \co[1]{\gamma}$.  Hence, finally, $c_n  = \co[n]{\gamma}$ for all $n
  \in \Z$.
\end{proof}

\section{Proof of Theorem~\ref{thm:1}}
\label{sec:proof-theor-refthm:2}

Here we finally deduce Theorem~\ref{thm:1} from Theorem~\ref{thm:2}.

\begin{proof}[Proof of Theorem~\ref{thm:1}]
  Fix $g \in  G$ arbitrarily and define two  sequences $\{c_n\}_{n \in
    \Z}$ and $\{\tilde c_n\}_{n \in \Z}$ by
  \begin{displaymath}
    c_n = \Cos(ng)
    \quad
    \text{and}
    \quad
    \tilde c_n = c(ng)
  \end{displaymath}
  for every $n \in \Z$.   By a result of Kannappan~\cite{kannappan68},
  there exists $\gamma \in \CC \setminus \{0\}$ such that
  \begin{math}
    \tilde  c_n = \co[n]{\gamma}
  \end{math}
  for all $n \in \N$.  Now,  $\gamma$ has unit modulus, for otherwise,
  should  $|\gamma|  \neq  1$  hold, $\gamma^n  +  \gamma^{-n}$  would
  diverge in modulus to infinity  as $n \to \infty$, contradicting the
  boundedness of $\{c(g)\}_{g \in G}$.  Clearly,
  \begin{displaymath}
    \sup_{n \in \Z} \| c_n - \co[n]{\gamma} \| < 1,
  \end{displaymath}
  so  we  can  apply  Theorem~\ref{thm:2}  to  conclude  that  $c_n  =
  \co[n]{\gamma}$ for all $n \in \Z$.  In particular, $\Cos(g) = c_1 =
  \co[1]{\gamma} = c(g)$.  As $g$  was chosen arbitrarily, the theorem
  is established.
\end{proof}

\bibliographystyle{amsplainurl}
\bibliography{coswallen-bib}

\end{document}